\newtheorem{theorem}{Theorem}[section]
\theoremstyle{definition}
\newtheorem{example}[theorem]{Example}
\title{\textbf{ non-vanishing elements and complex group algebras}}
\author{Mahdi Ebrahimi\footnote{ m.ebrahimi.math@ipm.ir}
 \\
 {\small\em  School of Mathematics, Institute for Research in Fundamental Sciences (IPM)},\\{\small\em P.O. Box: 19395--5746, Tehran, Iran}}
\date{}
\begin{document}

\maketitle


\begin{abstract}
Let $G$ be a finite group, and let $\mathrm{Irr}(G)$ denote the set of irreducible complex characters of $G$. An element $x$ of $G$ is said to be vanishing, if for some $\chi$ in $\mathrm{Irr}(G)$, we have $\chi(x)=0$. Also the element $x$ is called rational if $x$ is conjugate to $x^i$ for every integer $i$ co-prime to the order of $x$. We define the weight of $G$ as $\omega(G):=(\sum_{\chi\in \mathrm{Irr}(G)}\chi(1))^2/|G|$. In this paper, we show that for every rational non-vanishing element $x\in G$, the order of  $C_G(x)$ is at least $\omega(G)$.
 \end{abstract}
\noindent {\bf{Keywords:}}  non-vanishing element, complex group algebra, Cayley graph, energy. \\
\noindent {\bf AMS Subject Classification Number:}  20C15, 05C50, 05C92

\section{Introduction}
$\noindent$Let $G$ be a finite group. Denote by $\mathrm{Irr}(G)=\{\chi_1, \chi_2,\dots,\chi_k\}$ the set of all complex irreducible characters of $G$. The concept of a non-vanishing element of a finite group $G$ was introduced in \cite{in}.  An element $x\in G$ is said to be \textit{vanishing} if $\chi(x)= 0$, for some irreducible complex character $\chi$ of $G$; otherwise $x$ is called a \textit{non-vanishing} element of $G$. It is a classical theorem of Burnside \cite[Theorem 3.15]{I} that every non-linear $\chi \in \mathrm{Irr}(G)$ vanishes on some element of $G$. In other words, looking at the character table of $G$, the rows which do not contain the value $0$ are precisely those corresponding to linear characters. But it is in general not true that the columns not containing the value $0$ are precisely those corresponding to conjugacy classes of central elements, as there are finite groups having non-central non-vanishing elements. For instance every $3$-cycle of the symmetric group $\mathrm{Sym}(7)$ is a non-central non-vanishing element.

In resent decades, non-vanishing elements of finite groups have been extensively studied by researchers. For instance, Isaacs, Navarro and wolf \cite{in} showed that if $G$ is solvable and $x\in G$ is non-vanishing, then the order of $x$ modulo the Fitting subgroup $F(G)$ is a power of two, and so non-vanishing elements of odd order lie in $F(G)$. For arbitrary groups, it has been shown \cite{do} that a non-vanishing element $x$ of a finite group $G$ lies in $F(G)$ if the order of $x$ is co-prime to 6. Also Miyamoto \cite{mi} has proved that if $A$ is an elementary abelian normal $p$-subgroup of a finite group $G$ and $p$ is a Sylow $p$-subgroup of $G$, then all elements in $Z(P)\cap A$ are non-vanishing.

 It is well known that the complex group algebra $$\mathbb{C}G=\bigoplus_{i=1}^k M_{n_i}(\mathbb{C}),$$
where $n_i=\chi_i(1)$, for $1\leq i\leq k$. Thus, the complex  group algebra $\mathbb{C}G$ determines the character degrees of $G$ and their multiplicities.
An important question in character theory is whether One can recover a group or its properties from its complex group algebra. It has been shown \cite{Be} that all \textit{quasi-simple} groups are uniquely determined up to isomorphism by their complex group algebras. Recall that a finite group $G$ is quasi-simple if the group $G$ is perfect and $G/Z(G)$ is a non-abelian finite simple group. Also Gruninger \cite{Gr} proved that an element $x$ of a finite group $G$ is non-vanishing, if and only if the class sum of $x$ in the group algebra $\mathbb{C}G$ is a unit.
 For a finite group $G$ we define the \textit{weight} of $G$ as $\omega(G):=(\sum_{\chi\in \mathrm{Irr}(G)}\chi(1))^2/|G|$. It is clear that $\omega(G)$ can be evaluated by the complex group algebra $\mathbb{C}G$. In this paper, we wish to show that $\omega(G)$ is a lower bound for the order of the centralizer of each rational non-vanishing element of $G$. A rational element of the group $G$ is an element $x$ such that $x$ is conjugate to $x^i$ for every integer $i$ co-prime to the order of $x$.
\begin{theorem} \label{main}
Let $G$  be a finite group and let $x\in G$ be a rational non-vanishing element. Then  $|C_G(x)|\geq\omega(G)$.
\end{theorem}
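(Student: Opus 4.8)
The plan is to reduce the whole statement to two classical facts about the character table --- the column orthogonality relations and the Cauchy--Schwarz inequality --- once the arithmetic content of the two hypotheses on $x$ has been extracted. Write $\mathrm{Irr}(G)=\{\chi_1,\dots,\chi_k\}$ and $n_i=\chi_i(1)$.

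First I would observe that rationality together with non-vanishing forces $|\chi_i(x)|\geq 1$ for every $i$. Indeed, since $x$ is rational, $x^j$ is conjugate to $x$ for every $j$ coprime to $o(x)$, so $\chi_i(x)=\chi_i(x^j)$; as the map $\zeta\mapsto\zeta^j$ runs over $\mathrm{Gal}(\mathbb{Q}(\zeta_{o(x)})/\mathbb{Q})$ and carries $\chi_i(x)$ to $\chi_i(x^j)$, the value $\chi_i(x)$ is fixed by that Galois group, hence lies in $\mathbb{Q}$. Being also an algebraic integer, $\chi_i(x)\in\mathbb{Z}$. Since $x$ is non-vanishing, $\chi_i(x)\neq 0$, and therefore $|\chi_i(x)|^2\geq 1$.

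Next, the second (column) orthogonality relation evaluated at the class of $x$ reads $\sum_{i=1}^k|\chi_i(x)|^2=|C_G(x)|$, so the previous step already gives $|C_G(x)|\geq k=|\mathrm{Irr}(G)|$. It then remains to check $k\geq\omega(G)$, which is pure Cauchy--Schwarz applied to the vectors $(n_1,\dots,n_k)$ and $(1,\dots,1)$: this gives $\bigl(\sum_i n_i\bigr)^2\leq k\sum_i n_i^2=k|G|$, i.e. $k\geq\bigl(\sum_i n_i\bigr)^2/|G|=\omega(G)$. Chaining the two inequalities finishes the argument: $|C_G(x)|\geq k\geq\omega(G)$.

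I do not expect a serious obstacle; the only step that genuinely uses the ``rational'' hypothesis, and the only one needing a little care, is the Galois-theoretic passage from ``$\chi_i(x)$ is fixed by all relevant field automorphisms'' to ``$\chi_i(x)\in\mathbb{Z}$'' --- without rationality, $|\chi_i(x)|$ could be a nonzero algebraic number of absolute value smaller than $1$ and the bound would break. For the packaging suggested by the keywords, the very same conclusion can be phrased through the energy of the Cayley graph $\mathrm{Cay}(G,x^G)$ on the conjugacy class of $x$: its adjacency eigenvalues are $\frac{|x^G|}{n_i}\chi_i(x)$ with multiplicity $n_i^2$, so its energy equals $\frac{|G|}{|C_G(x)|}\sum_i n_i|\chi_i(x)|\geq\frac{|G|}{|C_G(x)|}\sum_i n_i$ by the first step, and comparing this with the standard bound $\mathcal{E}\leq N\sqrt{d}$ for a $d$-regular graph on $N$ vertices (here $N=|G|$ and $d=|x^G|=|G|/|C_G(x)|$) yields $|C_G(x)|\geq\bigl(\sum_i n_i\bigr)^2/|G|=\omega(G)$ directly.
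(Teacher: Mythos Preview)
Your argument is correct. In fact your primary route --- column orthogonality plus Cauchy--Schwarz --- is strictly more elementary than what the paper does, and it yields the sharper intermediate statement $|C_G(x)|\geq k(G)$ (the number of conjugacy classes), which always dominates $\omega(G)$. The paper instead works entirely through the Cayley graph $\Gamma=\mathrm{Cay}(G,x^G)$: it first proves $\mathcal{E}(\Gamma)\geq|x^G|\sum_\chi\chi(1)$ (your inequality $|\chi_i(x)|\geq 1$, repackaged) and then invokes the upper bound $\mathcal{E}(\Gamma)\leq |G|\sqrt{|x^G|}$ from the graph-energy literature; unpacking that upper bound one finds it is Cauchy--Schwarz applied to the vectors $(n_i)$ and $(|\chi_i(x)|)$ rather than to $(n_i)$ and $(1,\dots,1)$. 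Your ``packaging'' paragraph is thus essentially the paper's own proof, so you have given both the paper's argument and a shorter bypass of it.
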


\section{The energy of Cayley graphs}
$\noindent$In this paper, all groups and graphs are assumed to be finite.
 Let $\Gamma$ be a simple graph with vertex set $\{\nu_1,\nu_2,\dots, \nu_n\}$.
 The \textit{adjacency matrix} of $\Gamma$, denoted by $A(\Gamma)$, is the $n\times n$ matrix such that the $(i,j)$-entry is $1$ if $\nu_i$ and $\nu_j$ are adjacent, and is $0$ otherwise.
  The \textit{eigenvalues} of $\Gamma$ are the eigenvalues of its adjacency matrix $A(\Gamma)$.

   Let $G$ be a finite group and $S$ be
 an inverse closed subset of $G$ with $1 \notin S$.
  The \textit{Cayley graph} $\mathrm{Cay}(G,S)$ is the graph which has the elements of
   $G$ as its vertices and two vertices $u,\nu \in G$
    are joined by an edge if and only if $\nu=au$, for some $a\in S$. A Cayley graph $\mathrm{Cay}(G,S)$ is called \textit{normal} if $S$ is closed under conjugation with elements of $G$. It is well known that the eigenvalues of a normal Cayley graph  $\mathrm{Cay}(G,S)$ can be expressed in terms of the irreducible characters of $G$ \cite[p.235]{eigen}.

\begin{theorem}\label{eigen}(\cite{2}, \cite{6}, \cite{17}, \cite{19})
The eigenvalues of a normal Cayley graph $\mathrm{Cay}(G,S)$
are given by $\eta_\chi=\frac{1}{\chi(1)}\sum_{a\in S}\chi(a)$ where
 $\chi$ ranges over all complex irreducible characters of $G$. Moreover,
  the multiplicity of $\eta_{\chi}$ is $\chi(1)^2$.
\end{theorem}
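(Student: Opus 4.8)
The plan is to realize the adjacency matrix of the Cayley graph as an operator arising from the regular representation of $G$ and then to diagonalize it using the Wedderburn decomposition of the group algebra. Write $\hat{S}=\sum_{a\in S}a\in\mathbb{C}G$, and equip $\mathbb{C}G$ with its standard basis $\{e_g:g\in G\}$. First I would verify that, under the edge rule $\nu=au$, the operator of left multiplication by $\hat S$ on $\mathbb{C}G$ has matrix exactly $A(\mathrm{Cay}(G,S))$ in this basis: since $e_u\mapsto\sum_{a\in S}e_{au}$, the $(\nu,u)$-entry counts the $a\in S$ with $\nu=au$, which equals $1$ precisely when $\nu u^{-1}\in S$ and $0$ otherwise. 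Thus the spectrum of the graph coincides with the spectrum of $\hat S$ acting on $\mathbb{C}G$ through the left regular representation $\rho$.

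Next I would use that $\mathrm{Cay}(G,S)$ is normal, i.e. $S$ is closed under conjugation, to conclude that $\hat S$ is a central element of the group algebra: for every $g\in G$ one has $g\hat S g^{-1}=\sum_{a\in S}gag^{-1}=\hat S$, because conjugation permutes $S$. Recalling the decomposition of the regular representation $\mathbb{C}G\cong\bigoplus_{i=1}^{k}V_i^{\oplus n_i}$, where $V_i$ is the irreducible module affording $\chi_i$ and $n_i=\dim V_i=\chi_i(1)$, Schur's lemma forces $\hat S$ to act on each $V_i$ as a scalar $\lambda_i\cdot\mathrm{Id}_{V_i}$.

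To identify the scalar I would take traces on $V_i$: on one hand $\mathrm{tr}_{V_i}(\rho_i(\hat S))=\lambda_i\dim V_i=\lambda_i\chi_i(1)$, and on the other hand $\mathrm{tr}_{V_i}(\rho_i(\hat S))=\sum_{a\in S}\chi_i(a)$. Equating yields $\lambda_i=\frac{1}{\chi_i(1)}\sum_{a\in S}\chi_i(a)=\eta_{\chi_i}$, the claimed eigenvalue. For the multiplicity, observe that on each of the $n_i$ copies of $V_i$ inside $\mathbb{C}G$ the operator $\rho(\hat S)$ restricts to the scalar $\eta_{\chi_i}$ on a space of dimension $n_i$; hence the $\chi_i$-isotypic component contributes $n_i\cdot n_i=\chi_i(1)^2$ to the multiplicity of $\eta_{\chi_i}$, and summing over $i$ accounts for $\sum_i\chi_i(1)^2=\dim\mathbb{C}G=|G|$ eigenvalues, as it must.

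The routine checks are the matrix identification in the standard basis and the trace computation; the one point deserving care is the bookkeeping of multiplicities when two distinct characters happen to produce the same numerical value $\eta_{\chi_i}=\eta_{\chi_j}$. In that situation the stated multiplicity $\chi(1)^2$ is to be read as the contribution of the single $\chi$-isotypic block, the actual multiplicity of the common numerical eigenvalue being the sum of the relevant $\chi_i(1)^2$; this is precisely the content of the Wedderburn decomposition and needs no extra argument. I would also note, as a consistency check, that reality of the $\eta_\chi$ matches the symmetry of $A$ forced by $S=S^{-1}$, since $\chi(a^{-1})=\overline{\chi(a)}$.
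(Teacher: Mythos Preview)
Your argument is correct and is precisely the standard proof of this fact. However, the paper does \emph{not} supply its own proof of this theorem: it is quoted as a known result with references to Babai, Diaconis--Shahshahani, Lubotzky, and Ram Murty, and is used as a black box in the proofs of Theorem~\ref{energy} and Theorem~\ref{main}. So there is nothing in the paper to compare against; your write-up simply fills in what the cited sources contain, via the identification of $A(\mathrm{Cay}(G,S))$ with left multiplication by $\hat S$ on $\mathbb{C}G$, centrality of $\hat S$ from the normality hypothesis, Schur's lemma, and the trace computation. Your remark on coinciding numerical eigenvalues is the right reading of the multiplicity statement.
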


Suppose $\{\lambda_1, \lambda_2, \dots, \lambda_n\}$ is the set of all eigenvalues of a simple graph $\Gamma$.
 The \textit{energy} of $\Gamma$ is defined as $\mathcal{E}(\Gamma):=\sum_{i=1}^n|\lambda_i|$.
  This concept was first introduced by Gutman \cite{182}. Now we wish to obtain a lower bound for the energy of normal Cayley graphs.
  \begin{theorem}\label{energy}
  Assume that $G$ is a finite group and $C$ is a conjugacy class of $G$ containing a non-trivial rational non-vanishing element $x\in G$. Then $$\mathcal{E}(\mathrm{Cay}(G,C))\geq |C|\sum_{\chi \in \mathrm{Irr}(G)}\chi(1).$$
  \end{theorem}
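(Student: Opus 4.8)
The plan is to evaluate $\mathcal{E}(\mathrm{Cay}(G,C))$ exactly using Theorem \ref{eigen}, and then to show that the arithmetic hypotheses on $x$ force every character value $\chi(x)$ to have absolute value at least $1$, which collapses the exact formula to the claimed inequality. First I would check that $\mathrm{Cay}(G,C)$ is a legitimate normal Cayley graph so that Theorem \ref{eigen} is applicable: since $x$ is rational and $-1$ is coprime to the order $n$ of $x$, the element $x^{-1}$ is conjugate to $x$, so the conjugacy class $C$ is inverse-closed; since $x$ is non-trivial, $1 \notin C$; and $C$, being a conjugacy class, is closed under conjugation.

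Next, because every $\chi \in \mathrm{Irr}(G)$ is constant on $C$ with value $\chi(x)$, Theorem \ref{eigen} gives the eigenvalue $\eta_\chi = \frac{1}{\chi(1)}\sum_{a \in C}\chi(a) = \frac{|C|\,\chi(x)}{\chi(1)}$ with multiplicity $\chi(1)^2$. Summing $|\eta_\chi|$ over all eigenvalues counted with multiplicity yields
$$\mathcal{E}(\mathrm{Cay}(G,C)) = \sum_{\chi \in \mathrm{Irr}(G)} \chi(1)^2 \cdot \frac{|C|\,|\chi(x)|}{\chi(1)} = |C| \sum_{\chi \in \mathrm{Irr}(G)} \chi(1)\,|\chi(x)|,$$
so it suffices to prove $|\chi(x)| \geq 1$ for every $\chi \in \mathrm{Irr}(G)$. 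The key step is the standard observation that $\chi(x) \in \mathbb{Q}(\zeta_n)$ and that each Galois automorphism $\sigma_i \colon \zeta_n \mapsto \zeta_n^i$ of $\mathbb{Q}(\zeta_n)/\mathbb{Q}$ sends $\chi(x)$ to $\chi(x^i)$; since $x$ is rational, $x^i$ is conjugate to $x$ for all $i$ coprime to $n$, whence $\chi(x)$ is fixed by the whole Galois group and therefore lies in $\mathbb{Q}$. Being an algebraic integer, $\chi(x) \in \mathbb{Z}$, and because $x$ is non-vanishing we have $\chi(x) \neq 0$; hence $|\chi(x)| \geq 1$. Substituting this bound into the displayed formula gives $\mathcal{E}(\mathrm{Cay}(G,C)) \geq |C| \sum_{\chi \in \mathrm{Irr}(G)} \chi(1)$.

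I do not expect a serious obstacle here: the argument is essentially a direct computation together with the integrality of character values at rational elements. The only points requiring care are verifying the well-definedness of the Cayley graph (handled above) and tracking the multiplicities correctly, so that the factor $\chi(1)^2$ from Theorem \ref{eigen} cancels against the $\chi(1)$ in the denominator of $|\eta_\chi|$ to leave exactly one power of $\chi(1)$ in each summand.
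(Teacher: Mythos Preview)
Your proposal is correct and follows essentially the same approach as the paper: compute the energy via Theorem~\ref{eigen} as $|C|\sum_{\chi}\chi(1)|\chi(x)|$, then use that $\chi(x)$ is a nonzero rational algebraic integer (hence $|\chi(x)|\ge 1$) to obtain the bound. The only cosmetic differences are that you verify explicitly that $C$ is inverse-closed and $1\notin C$, and you deduce $\chi(x)\in\mathbb{Z}$ directly via the Galois action, whereas the paper cites a result on integral Cayley graphs to the same effect.
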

  \begin{proof}
 Let $\chi \in \mathrm{Irr}(G)$. Then by \cite[Corollary 3.6]{I}, $\chi(x)$ is an algebraic integer.  Also as $x$ is a rational element of $G$, using \cite[Theorem 1]{ko}, we deduce that eigenvalues of $\mathrm{Cay}(G,C)$ are integers. Thus as $x$ is non-vanishing, applying Theorem \ref{eigen}, we conclude that $\chi(x)$ is a non-zero integer. Hence by Theorem \ref{eigen}, we get
 \begin{align}
\mathcal{E}(\mathrm{Cay}(G,C))&=\sum_{\chi\in \mathrm{Irr}(G)}\chi(1)|C||\chi(x)|\nonumber\\
&\geq |C|\sum_{\chi\in \mathrm{Irr}(G)}\chi(1).\nonumber
\end{align}
  \end{proof}
  \section{The proof of Theorem \ref{main}}
  In this section, we wish to prove Theorem \ref{main}.\\
  \textit{Proof of Theorem \ref{main}:} Without loss of generality, we can assum that $x$ is non-trivial. Let $C$ be the conjugacy class of $G$ containing $x$. Set $\Gamma:=\mathrm{Cay}(G,C)$. By Theorem \ref{energy},
  \begin{align}\label{(1)}
  \mathcal{E}(\Gamma)\geq |C|\sum_{\chi \in \mathrm{Irr}(G)}\chi(1).
  \end{align}
  Since $x$ is non-vanishing, the adjacency matrix $\mathrm{A}(\Gamma)$ of $\Gamma$ is non-singular. Thus using \cite[Theorem 4.5]{Li} and inequality (\ref{(1)}), we deduce that
  $$|G|\sqrt{|C|}\geq |C|\sum_{\chi \in \mathrm{Irr}(G)}\chi(1).$$
  Therefore
  $$|C_G(x)|\geq \omega(G).$$
 This completes the proof.\qed

  We end this section with the following example.
  \begin{example}
   Let $G$ be the sporadic simple group $M_{11}$ and let $x$ be a $5$-element of $G$ (see \cite{atlas}). Since
   $$\mathbb{C}G=M_1(\mathbb{C})\oplus M_{10}(\mathbb{C})^3\oplus M_{11}(\mathbb{C})\oplus M_{16}(\mathbb{C})^2\oplus  M_{44}(\mathbb{C})\oplus M_{45}(\mathbb{C})\oplus M_{55}(\mathbb{C}),$$
   it is easy to see that $\omega(G)=6.00\bar{05}$. Thus as $x$ is a rational element with $|C_G(x)|=5$, by Theorem \ref{main}, $x$ is a vanishing  element of $G$.
  \end{example}

\section*{Acknowledgements}
This research was supported in part
by a grant  from School of Mathematics, Institute for Research in Fundamental Sciences (IPM).



\begin{thebibliography}{22}
\bibitem{2}
L. Babai, Spectra of Cayley graphs, J. Comb. Theory, Ser. B 27(1979) 180-189. DOI: https://doi.org/1001016/0095-8956(79)-90079-0
%
\bibitem{Be}
C. Bessenrodt, H. N. Nguyen, J. B. Olsson, H. P. Tong-Viet, Complex group algebras of the double covers of the symmetric and alternating groups, Algebra Number Theory 9(3) (2015)601-628.
%
\bibitem{atlas}
J.H. Conway, R.T. Curtis, S.P. Norton, R.A. Parker, R.A. Wilson, Atlas of finite groups, Clarendon Prees, Oxford 1985.
%
\bibitem{eigen}
C. W. Curtis, I. Reiner, Representation theory of finite groups and associative algebras, Pure and Applied Mathematics, vol. XI, Wiley, New york, 1962.
%
\bibitem{6}
P. Diaconis, M. Shahshahani, Generating a random permutation with random transpositions, Z. Wahrsch. Verw. Gebiete. 57(1981) 159-179. 0044-3719/81/0057/0159/S04.20
%
\bibitem{do}
S. Dolfi, G. Navarro, E. Pacifici, L. Sanus, P. H. Tiep, Non-vanishing elements of finite groups, J. Algebra 323 (2010), no.2, 540-545.
%
\bibitem{Gr}
M. Gruninger, Two remarks about non-vanishing elements in finite groups, J. Algebra, 460 (2016) 366-369.
%
\bibitem{182}
I. Gutman, The energy of a graph, Ber. Math. Statist. Sekt. Forschungszentrum Graz 103 (1978) 1-22.
%
\bibitem{I}
I.M. Isaacs, character Theory of Finite Groups, Dover Publications, New York, 1994.
%
\bibitem{in}
I.M. Isaacs, G. Navarro, T.R. Wolf, Finite group elements where no irreducible characeter vanishes, J. Algebra 222 (1999) 413-423.
%
\bibitem{ko}
E. V. Konstantinova, D. Lytkina, Integral cayley graphs over finite groups, Algebra Colloq. 27(1) (2020) 131-136.
%
\bibitem{Li}
 X. Li, Y. Shi, I. Gutman, Graph Energy,	Springer, New York, 2012.
	%
\bibitem{17}
A. Lubotzky, Discrete groups, Expanding graphs, and invariant Measures, Birkhauser Verlag, Basel, 1994.
%
\bibitem{mi}
M. Miyamoto, Non-vanishing elements in finite groups, J. Algebra, 364 (2012) 88-89.
%
\bibitem{19}
M. Ram Murty, Ramanujan graphs, J. Ramanujan Math. soc. 18(2003)1-20.
%
\end{thebibliography}
\end{document}